\documentclass[a4paper,11pt]{article}

\usepackage{amsmath}
\usepackage{amssymb}
\usepackage{amsfonts}
\usepackage{amsthm}
\usepackage{epigraph}
\usepackage{natbib}
\setlength{\bibsep}{0pt plus 0.3ex}
\usepackage{graphicx}
\graphicspath{{figures/}}
\usepackage{subcaption}
\usepackage{caption}
\usepackage{authblk}
\usepackage{pgfplots}
\usepackage[margin=2.6cm]{geometry}
\usepackage{algorithm}
\usepackage{algpseudocodex}
\usepackage{enumitem}
\usepackage{booktabs}
\linespread{1}
\usetikzlibrary{datavisualization}

\newtheorem{proposition}{Proposition}

\setlength\parindent{0pt}

\begin{document}

\title{Best-Worst Disaggregation: An approach to the preference disaggregation problem}

\author[1]{Matteo Brunelli}
\author[2]{Fuqi Liang}
\author[3]{Jafar Rezaei}

\affil[1]{Department of Industrial Engineering, University of Trento, Italy}
\affil[2]{Department of Innovation, Entrepreneurship and Strategy, Zhejiang University, China}
\affil[3]{Department Engineering Systems and Services, Delft University of Technology, The Netherlands}

\date{}

\maketitle

\begin{abstract}
Preference disaggregation methods in Multi-Criteria Decision Analysis (MCDA) often encounter challenges related to inconsistency and cognitive biases when deriving a value function from experts' holistic preferences. This paper introduces the Best-Worst Disaggregation (BWD) method, a novel approach that integrates the principles of the Best-Worst Method (BWM) into the disaggregation framework to enhance the consistency and reliability of derived preference models. BWD employs the ``consider-the-opposite'' strategy from BWM, allowing experts to provide two opposite pairwise comparison vectors of alternatives. This approach reduces cognitive load and mitigates anchoring bias, possibly leading to more reliable criteria weights and attribute value functions. An optimization model is formulated to determine the most suitable additive value function to the preferences expressed by an expert. The method also incorporates a consistency analysis to quantify and improve the reliability of the judgments. Additionally, BWD is extended to handle interval-valued preferences, enhancing its applicability in situations with uncertainty or imprecise information. We also developed an approach to identify a reference set, which is used for pairwise comparisons to elicit the value functions and weights. A case study in logistics performance evaluation demonstrates the practicality and effectiveness of BWD, showing that it produces reliable rankings aligned closely with experts' preferences. 
\end{abstract}

{\small
\textbf{Keywords}: Multi-Criteria Decision Analysis (MCDA); Best-Worst Method (BWM); Preference disaggregation; Consistency analysis; Imprecise preference information
}


\section{Introduction}

The field of Multi-Criteria Decision Analysis (MCDA) encompasses a wide range of methods designed to aid experts in evaluating a set of alternatives characterized by a set of attributes/criteria \citep{GrecoEtAl2016}. Within MCDA, value-based methods, such as Multi-Attribute Value Theory (MAVT) \citep{KeeneyRaiffa1993}, Analytic Hierarchy Process (AHP) \citep{Saaty1977}, and Best-Worst Method (BWM) \citep{Rezaei2015} , aim to assign a numerical value to each alternative. In these methods, experts assess the relative importance of criteria and the performance of alternatives on these criteria, often using pairwise comparisons to derive weights and scores. The scores are then aggregated to produce an overall ranking, emphasizing the total utility or value derived from each alternative. 

Value-based methods are further divided into aggregation and disaggregation approaches. Aggregation methods, such as MAVT, elicit value functions and trade-offs among the objectives to evaluate and rank the alternatives. Disaggregation methods, including the UTA family \citep{SiskosEtAl2016}, on the other hand, are appropriate when an expert can provide holistic preferential information on a reference set of alternatives. These methods will then help infer the underlying value functions and trade-offs. \cite{doumpos2011} provided a comprehensive review of preference disaggregation methods, emphasizing their connections with statistical learning techniques, and highlighting how these approaches can be integrated to enhance decision support systems. In this context, preference learning emerges as a promising alternative, functioning as an explainable AI that learns and maintains fully interpretable preferences. Preference disaggregation techniques focus on deriving preference structures that are both accurate and transparent, aligning with the growing emphasis on explainable models in decision-making \citep{Rudin2019}. By ensuring that the learned preferences are easily understandable, preference learning enhances the usability and acceptance of MCDA methods.

Despite these methodological advances, a fundamental challenge remains. Behavioral decision theory suggests that decision-makers often exhibit inconsistencies in their judgment and are influenced by various cognitive biases \citep{TverskyKahneman1974}. While disaggregation methods are powerful and widely applicable, their reliance on a single round of holistic comparisons may leave room for judgment inconsistencies to influence the results. The comparison processes might introduce some variability in ranking outcomes, especially when experts’ judgments are internally inconsistent, potentially affecting the stability of the inferred value functions \citep{MontibellerWinterfeldt2015,MortonFasolo2009}. This consideration is particularly important for methods like the UTA family, most of which inherently assume a high level of consistency and rationality in the provided preference rankings \citep{Vetschera2006,SchillingEtAl2007,KorhonenEtAl2012}. {GRIP (Generalized Regression with Intensities of Preferences) \citep{figueira2009} enriches the UTA family by explicitly modeling preference intensities through a regression framework, thereby offering analysts a more nuanced view of stakeholder judgments; this additional expressive power naturally comes with a need for some statistical expertise and appropriate software support.\citet{BarbatiGrecoLami2024} proposed a Deck-of-Cards-based Robust Ordinal Regression (DOR) to push further the use of cardinal preferential information in preference disaggregation methods.

One practical issue that can arise when applying UTA-type methods is that incomplete preference information may leave the analyst with several—occasionally infinitely many—additive value models that all fit the decision maker’s holistic judgments \citep{GrecoEtAl2008}. This plurality, while faithful to the data, may translate into more than one plausible recommendation. Eliciting additional judgments is an effective way to narrow the solution set, but time or cognitive-load constraints sometimes limit how much extra information can be gathered \citep{CiomekEtAl2017,SaloHamalainen2010}. Robust Ordinal Regression and related robustness-analysis procedures offer valuable insight into the full spectrum of admissible rankings, yet presenting a range of models instead of a single solution can feel abstract to users who prefer an immediately actionable outcome. Regularization frameworks—designed to balance goodness-of-fit with smoothness of the inferred value functions—provide another principled way to identify a representative model \citep{DoumposZopounidis2007,LiuEtAl2019}, although their implementation typically demands a higher level of modeling expertise \citep{KadzinskiEtAl2017}.}

To further address these challenges, this study introduces a novel solution called Best-Worst Disaggregation (BWD) method, which integrates the idea of BWM within the disaggregation framework of MCDA. Let us note that the idea of combining the two approaches was already suggested by \citet{BarbatiGrecoLami2024}. The BWD method leverages the structured pairwise comparison approach of BWM alongside the foundational principles of disaggregation methods to derive the value functions,  reliably and consistently, from the preferences provided by the experts. The proposed method adopts the consider-the-opposite strategy \citep{MussweilerStrack2000,MussweilerStrackPfeiffer2000}, by involving comparisons between two opposite reference points—best and worst—and other criteria. This strategy has been incorporated into numerous psychological studies to enhance judgment consistency and demonstrated its efficacy to mitigate the negative effects of the anchoring bias \citep{Adame2016,Joslyn2011,Mussweiler2002,Rezaei2021,RezaeiEtAl2022,RezaeiEtAl2024}. While DOR is expressly designed to capture both ranking and preference intensity—thanks to its combination of the Deck-of-Cards visual metaphor and ordinal-regression machinery—this richer elicitation protocol is best suited to decision settings where stakeholders have the time and inclination for in-depth interaction.

We summarize the contribution of this study as follows: Introducing a preference disaggregation method by combining the Best-Worst Method (BWM) with the disaggregation framework in MCDA, employing the ``consider-the-opposite'' strategy to reduce cognitive bias, and developing an optimization model to derive the most suitable additive value function based on experts' pairwise comparisons. Extending the method to handle interval-valued preferences broadens its applicability in situations with uncertainty or imprecise information. A systematic approach is proposed for selecting a reference set that covers all criteria levels, minimizes cognitive load, and avoids Pareto dominance. Consistency analysis is incorporated using both ordinal and cardinal consistency ratios to ensure higher reliability of results, along with sensitivity analysis to assess the maximum and minimum possible ranks for alternatives, evaluating their stability.

The remainder of this paper is structured as follows:
Section \ref{sec:preliminaries} provides preliminary information on the underlying theories of MCDA, focusing on BWM and UTA methods.
Section \ref{sec:BWD} details the proposed BWD method, illustrating how it integrates the structured pairwise comparison approach of BWM with the disaggregation framework of UTA to derive value functions. In its subsections we explore consistency and sensitivity analysis as well as interval-valued extensions.
Section \ref{sec:case-study} discusses a real-world application of the method, showcasing its practical utility.
Finally, Section \ref{sec:conclusions} concludes the paper and suggests directions for future research.

\section{Preliminaries}
\label{sec:preliminaries}

\subsection{Multi-attribute value theory (MAVT)}

Multi-Attribute Value Theory (MAVT) is a framework in MCDA that assumes that each alternative can be characterized by a list of its relevant attributes. A typical MCDA problem consists of a non-empty finite set of $m$ alternatives whose indices range in the set $M=\{1,\ldots,m \}$ and a finite set of $n$ attributes (also known as criteria) indexed by the set $N=\{1,\ldots,n \}$. The set of possible levels that can be achieved by a generic alternative with respect to the $j$th attribute is denoted by $X_j$.
Assuming all relevant attributes have been considered, each alternative can be associated with a consequence vector $\mathbf{x}=(x_1,\ldots,x_n ) \in X_1 \times \cdots \times X_n$, where $x_j \in X_j$ represents the level of the $j$th attribute achieved by the alternative. For simplicity, we will consider consequence vectors $\mathbf{x}$ in place of alternatives.
In MAVT, each attribute can be rescaled and normalized into the interval \([0,1]\) using a function $v_j:X_j \rightarrow [0,1]$ where we assign values 0 and 1 to the least \(\underline{x}_j \) and most \(\overline {x}_j \) desirable attribute levels, respectively. Hence, $v_j (x_j ) \in [0,1]$ can be interpreted as the value of the consequence evaluated with respect to the $j$th attribute.
The primary challenge in MAVT is to find a function $V:[0,1]^n \rightarrow [0,1]$ that can correctly aggregate the $n$ values $v_j (x_j )$ and represent the preferences of a decision-maker or an expert. Namely, given a set of alternatives, each of which can be uniquely represented by an associated consequence vector, the value function $V$ should be coherent with the  preorder structure on the set of alternatives:
\begin{equation}
\label{eq:order}
V(\mathbf{x}) \geq V(\mathbf{y})   \Leftrightarrow \mathbf{x}  \succsim \mathbf{y}  \;\; \text{($\mathbf{x}$ is weakly preferred to $\mathbf{y}$)}
\end{equation}

Several theoretical results from the literature help restrict the set of compatible value functions. Under mild conditions such as preference independence and difference independence \citep{SmithDyer2021}, there exist attribute value functions $v_j:X_j \rightarrow [0,1]$ such that:
\begin{equation}
\label{eq:additive}
V(\mathbf{x})= \sum_{j=1}^{n} w_j v_j (x_j )
\end{equation}

where the scaling constants (also called weights) are such that $w_j \geq 0$ for all $j$, and $w_1 + \cdots + w_n  =1$. This type of value function is called additive value function. Assuming this formulation, the main task of multi-attribute value theory methods is to find attribute value functions $v_j$ and scaling constants $w_j$. This is where we can roughly divide MAVT methods into two categories.

The first category contains the methods which use available information on the order relation $\succsim$ to deduce attribute value functions $v_j$ and weights $w_j$. That is, these methods use partial information on the right-hand side of \eqref{eq:order} to deduce its left-hand side, and then apply the elicited left-hand side to find the complete order relation on the set of alternatives. Such methods are called \emph{preference disaggregation} or \emph{preference learning} methods. Among them there are the well-known UTA methods \citep{SiskosEtAl2016}.

The second family of methods uses elicitation techniques on experts to find compatible attribute value functions $v_{j}$'s and scaling constants $w_j$'s. That is, these methods work entirely on the left-hand side of equivalence \eqref{eq:order}. Here we have combinations of methods like the mid-value splitting technique and the trade-off method \citep{EisenfuhrEtAl2010}, the SMART \citep{EdwardsBarron1994} and Swing families \citep{WinterfeldtEdwards1986} as well as, among others, the already mentioned AHP and BWM. 


\subsection{Best-Worst Method}

The Best-Worst Method (BWM) is a pairwise comparison-based method used to elicit the weights of the criteria in a structured way. The pairwise comparison data is collected from the expert who is asked to first identify a set of relevant decision criteria. Then the expert should identify the ‘best’ and the ‘worst’ criteria. The ‘best’ criterion is the one which, according to the expert, has the largest contribution to the goal of the decision-making problem, while the ‘worst’ has the smallest contribution. Then, the expert compares the best criterion, $B$, to all the others, and also all the criteria to the worst one $W$, using a number between 1 to 9 (or possibly some other scales). Finally, an optimization model is formulated and solved to find the weights, $w_j$, such that the maximum absolute difference between the pairwise comparisons and their associated weight ratios is minimized. The optimization model is as follows \citep{Rezaei2015},
\begin{equation}
\label{eq:BWM}
\begin{aligned}
& \underset{\xi,w_{1},\ldots,w_{n}}{\text{minimize}} & &  \xi \\
& \text{subject to} & & \left| \frac{w_B}{w_j} -a_{Bj} \right| \leq \xi \hspace{0.5cm} \forall j \in N \setminus \{ B \} \\
&  & & \left| \frac{w_j}{w_W} -a_{jW} \right| \leq \xi \hspace{0.5cm} \forall  j \in N \setminus \{ W \} \\
&  & & w_1 + \cdots + w_n = 1 \\
&  & & w_j \geq 0 \hspace{0.5cm} \forall j \in N \, .
\end{aligned}
\end{equation}
where $a_{Bj}$ represents the value of the comparison between the best criterion $B$ and the $j$th criterion, and similarly $a_{jW}$ is the comparison between the $j$th criterion and the worst one, $W$. Optimization problem \eqref{eq:BWM} is non-linear and the optimal value of the objective function can be interpreted as a quantification of inconsistency. There exists a linear version of the BWM which is easier to solve, yet with very similar final results, which is presented as follows \citep{Rezaei2016}.

\begin{equation}
\label{eq:BWMlinear}
\begin{aligned}
& \underset{\xi^{L},w_{1},\ldots,w_{n}}{\text{minimize}} & &  \xi^{L} \\
& \text{subject to} & & | w_B-a_{Bj} w_j | \leq \xi^{L} \hspace{0.5cm} \forall j \in N \setminus \{ B \} \\
&  & & |w_j-a_{jW} w_W | \leq \xi^{L} \hspace{0.5cm} \forall  j \in N \setminus \{ W \} \\
&  & & w_1 + \cdots + w_n = 1 \\
&  & & w_j \geq 0 \hspace{0.5cm} \forall j \in N\, .
\end{aligned}
\end{equation}

Solving \eqref{eq:BWMlinear} results in the optimal weights and the optimal objective value can again be used as an inconsistency index.
A survey on extensions and applications of the BWM method was proposed by \citet{MiEtAl2019}.
In the BWM, questions on pairs on criteria regard their relative importance in the decision process. Conversely, in MAVT, ratios between weights are interpreted as rates of substitutions. To fill this gap, the Best-Worst Tradeoff (BWT) was introduced \citep{liang2022} to adapt the BWM to the context of MAVT. Whereas the interpretation between the values of the comparisons used in BWM and BWT are different, the formulations of the models are sufficiently similar so that the former is still representative of the latter.

\subsection{Preference disaggregation analysis framework}

Preference disaggregation is an approach within MCDA that focuses on deriving criteria weights and value functions from some holistic preferences of experts on a reference set of alternatives \citep{DoumposEtAl2022}. This approach is particularly valuable when: 
\begin{itemize}
    \item experts or decision-makers find it more practical to provide overall evaluations of alternatives rather than detailed information on criteria. An example is the application of preference learning to the selection of validators in a blockchain environment \citep{GehrleinEtAl2023}.
    \item there is a sufficiently large set of well-known alternatives from which an expert or a decision-maker can easily extract a reference set on which she feels sufficiently confident to express her preferences. An example is the study by \citet{CorrenteEtAl2013} on the levels of innovation of European countries.
    \item comparing some pairs of criteria is difficult. For example, when it comes to choosing a holiday destination, we may be able to easily rank alternatives and yet we may not be able to assess the substitution rate between factors such as distance from home and time taken for the planning. 
\end{itemize}

The UTA (UTilités Additives) method, proposed by \citet{JacquetSiskos1982}, is a regression-based approach within the field of MCDA. This method was designed to infer additive value functions that align with the preferences of an expert, as expressed through the ranking of a set of alternatives. The core idea is to disaggregate these preferences to construct value functions that best replicate the given ranking, ensuring consistency across the set of alternatives. The goal of the UTASTAR procedure--an improved variant of the basic UTA method--is to derive additive value functions that reflect the expert's holistic preferences. First, the expert provides a ranking on a reference set. Next, a linear optimization problem is solved to derive the attribute value functions for each criterion, ensuring that the attribute value functions replicate the provided rankings as closely as possible. More formally, the steps of UTASTAR are the following.

\paragraph*{Concept and framework}

In a typical multi-criteria decision-making problem, a set of $m$ alternatives $\{ \mathbf{x}_{1}\ldots,\mathbf{x}_{m}\}$ has to be evaluated with respect to $n$ criteria. The UTA method seeks to derive a global value function. To simplify the search for such a function, and knowing that $V$ is unique up to an affine transformation, UTASTAR considers the weights implicitly in the global value function. That is, attribute value functions are normalized in such a way that 
\begin{equation}
\label{eq:1c}
\sum_{j=1}^{n} v_j (\overline{x}_j )=1.
\end{equation}

By doing so, the explicit mention to weights can be omitted, as they simply correspond to the values $v_j (\overline{x}_j)$ and the value of an alternative $\mathbf{x}$ becomes
$V(\mathbf{x}) = \sum_{j=1}^{n} v_j (x_j) $.

\paragraph*{Linear Programming and Inference}

\citet{JacquetSiskos1982} proposed to use piecewise linear approximations of the attribute value functions $v_j$. For the $j$th criterion, the interval $ [\underline{x}_j, \overline{x}_j ]$ can be divided by $(s_{j}-1)$ (with $s_j \in \mathbb{N}_{\geq 1}$) equally spaced points in $s_{j}$ subintervals, and the approximate attribute value function $v_j^L$ can be defined as follows. If $x_j \in [ x_j^k,x_j^{k+1} ]$, then
\begin{equation}
v_j^L (x_j)= v_j^L(x_j^k ) + \frac{ x_j-x_j^k}{x_j^{k+1}-x_j^k } \left[v_j^L (x_j^{k+1} )-v_j^L (x_j^k )\right]
\end{equation}

Very often one can also assume monotonicity, so that each piecewise linear attribute value function $v_j^L (x_j )$  satisfies the following constraints (assuming that $j$ is a benefit criterion):
\begin{equation}
\label{eq:monotonicity}
 v_j^L (x_j^k ) \leq v_j^L (x_j^{(k+1)} )  \;\; \forall j \text{ and }  \forall k.
\end{equation}

The value function resulting from the aggregation of the piecewise linear attribute value functions is, then,
\begin{equation}
\label{eq:piecewiseV}
V_{L}(\mathbf{x})= \sum_{j=1}^{n} v_j^L (x_j) 
\end{equation}
At this point, the UTASTAR method assumes that an expert can provide a preference relation $\succsim$  on a selected reference index-set of alternatives $R$,\footnote{ Note that the reference set of alternatives may be a subset of the decision alternatives, i.e. $R \subseteq M$. Nevertheless, this is not necessary as the reference set of alternatives may contain other alternatives as well as and fictitious alternatives too.} and tries to find the piecewise linear value function $V_{L}$ that best fits the ranking. That is, if we consider two alternatives $\mathbf{x}$ and $\mathbf{y}$ in the reference set, then
\begin{equation}
\label{eq:prefR}
    \begin{cases}
        V_{L}(\mathbf{x})-V_{L}(\mathbf{y})  \geq \delta &  \text{if }\mathbf{x}  \succ \mathbf{y} \\
        V_{L}(\mathbf{x})-V_{L}(\mathbf{y})  = 0         &  \text{if }\mathbf{x}  \sim \mathbf{y}
    \end{cases}
\end{equation}

where $\delta$ is a small positive number and $\succ$ and $\sim$ are the asymmetric and symmetric parts of $\succsim$, respectively. Then, the true value of the generic alternative $\mathbf{x}$ can be expressed as,

\begin{equation}
\label{eq:approx}
\tilde{V}(\mathbf{x})= \sum_{j=1}^{n} v_j^L (x_j) -\sigma_{\mathbf{x}}^+ +\sigma_{\mathbf{x}}^-
\end{equation}

where $x_j$ is the attribute level achieved by alternative $\mathbf{x}$ with respect to the $j$th attribute and $\sigma_{\mathbf{x}}^+$ and $\sigma_{\mathbf{x}}^-$ are overestimation and underestimation errors, respectively. Albeit not a strict requirement, pragmatism suggests that the cardinality of $R$ be significantly smaller than the cardinality of $M$.

 \paragraph{Optimization}

 To obtain good approximations for the piecewise linear attribute value functions, one can focus on the same index-set $R$ characterizing the reference alternatives $\{ \mathbf{x}_{i} \, | \, i \in R \}$ on which an expert was asked to define a preorder $\succsim$ relation. Then, it is possible to proceed by accounting for conditions \eqref{eq:1c}--\eqref{eq:approx}, and solving the following goal programming model.

\begin{equation}
\label{eq:UTASTAR}
\begin{aligned}
\theta = \; & \text{minimize} & & \sum_{i \in R} \left( \sigma_{\mathbf{x}_i}^+ +\sigma_{\mathbf{x}_i}^- \right) \\
& \text{subject to} & &  \tilde{V}(\mathbf{x}_{p})-\tilde{V}(\mathbf{x}_{q})  \geq \delta   \text{ if }\mathbf{x}_{p}  \succ \mathbf{x}_{q} \\
& & &    \tilde{V}(\mathbf{x}_{p})-\tilde{V}(\mathbf{x}_{q})  = 0     \text{ if }\mathbf{x}_{p}  \sim \mathbf{x}_{q}\ \\
&  & &  v_j^L (x_j^k ) \leq v_j^L (x_j^{k+1} )   \hspace{0.5cm} \forall j \text{ and }  \forall k \\
&  & & \sum_{j=1}^{n} v_j (\overline{x}_j )=1 \\
& & & v_j (\underline{x}_j )=0  \hspace{0.5cm} \forall j \in N \\
& & & \sigma_{\mathbf{x}_i}^+ , \sigma_{\mathbf{x}_i}^- \geq 0 \hspace{0.5cm} \forall i \in R
\end{aligned}
\end{equation}
If the optimal value of the objective function is $\theta=0$, then the constructed attribute value functions can reflect the expert's preferences. Conversely, if $\theta>0$, then the attribute value functions fail to reflect them and thus are only approximations.

\paragraph{Stability} Given the possible existence of multiple optimal or near-optimal solutions, it is suggested to take the average of the additive value functions that maximize the objective functions $v_j(\overline{x}_j)$ for all $j \in N$, with the same constraints as in \eqref{eq:UTASTAR} and the addition of 
\[
\sum_{i \in R} \left( \sigma_{\mathbf{x}_i}^+ +\sigma_{\mathbf{x}_i}^- \right) \leq \theta^{*}+\varepsilon
\] 
where $\theta^{*}$ is the optimum of \eqref{eq:UTASTAR} and $\varepsilon >0$ is extremely small, so that only the set of optimal or near-optimal solutions is considered.


Over the years, several variants and extensions of UTA methods have been developed to enhance its applicability and robustness.
For instance, the UTADIS variant caters to sorting problems \citep{ZopounidisParaschou2013}. Another notable extension is the Quasi-UTA model, which simplifies the process by using recursive exponential functions to reduce the information needed to build the utility function \citep{BeutheEtAl2000}.
Since its inception, UTA methods have found applications across various fields \citep{DiasEtAl2018}, including commercial strategy, venture capital investment evaluation, country risk assessment, business financing, and portfolio management \citep{SiskosEtAl2016}. Methods such as UTA\textsuperscript{GMS} and GRIP incorporate robust ordinal regression to handle interacting criteria and enhance the decision-making process \citep{GrecoEtAl2008}. These advancements ensure that the UTA methods remain relevant and powerful tools in the field of MCDA \citep{DoumposEtAl2022}.

\section{Best-worst disaggregation method}
\label{sec:BWD}

Suppose we have a set of $m$ alternatives, represented by the profile vectors $\mathbf{x}_1,\ldots,\mathbf{x}_m$, which are evaluated with respect to a set of $n$ criteria. The performance matrix can be represented as follows,
\[
\mathbf{X}=
\begin{pmatrix}
\mathbf{x}_1 \\
\vdots \\
\mathbf{x}_m
\end{pmatrix}
=
\begin{pmatrix}
x_{11} & \cdots & x_{1n} \\
\vdots & \ddots & \vdots \\
x_{m1} & \cdots & x_{mn}
\end{pmatrix}
\]

where the $i$th row is the profile of the $i$th alternative, i.e., $\mathbf{x}_i$. Following the precepts of UTASTAR, we identify a reference set of alternatives and we call $R$ its index set. Then, according to the principles of BWM, we identify the indices of the best and the worst alternatives in the reference set and we label them with the letters $B$ and $W$, respectively. Let us incidentally note that the freedom in the definition of the reference set $R$ allows the decision maker to choose it in such a way to simplify the choice of the best and the worst alternative within it. We then conduct a holistic comparison between the best alternative $\mathbf{x}_B$ and the other alternatives, as well as between all the alternatives and the worst alternative $\mathbf{x}_W$. Such comparisons can be collected into two sets as follows:
\begin{align}
&A^{BO}=\left\{ a_{Bi }  \,|\, i \in R \right\} \\
&A^{OW}=\left\{ a_{iW }  \,|\, i \in R \right\}
\end{align}
Set $A^{BO}$ is called the ‘Best-to-Others’ set and its element $a_{Bi}$ shows the expert’s preference of the ‘best alternative’ $\mathbf{x}_B$ over alternative $\mathbf{x}_i$. Similarly, set $A^{OW}$ is called the ‘Others-to-Worst’ set and its generic element $a_{iW }$ represents the expert's preference of alternative $\mathbf{x}_i$ over the worst alternative $\mathbf{x}_W$. The preferences can be expressed using a number between 1 to 9 (or other scales), i.e. $a_{B i },a_{i W } \in \{1,\ldots,9\}$, where 1 represents ‘equality’ and 9 represents the ‘extreme preference’ of the former alternative over the latter.
In a situation expressing full rationality $V(\mathbf{x}_B )=a_{B i } V(\mathbf{x}_i )$ and $V(\mathbf{x}_i )=a_{i W } V(\mathbf{x}_W )$ for all $i \in R$. From this, it follows that 
\begin{equation}
\begin{cases}
\label{eq:ideal}
V_{L}(\mathbf{x}_B )- a_{B i } V_{L}(\mathbf{x}_i ) =0 \\
V_{L}(\mathbf{x}_i )- a_{i W } V_{L}(\mathbf{x}_W ) =0
\end{cases} \hspace{3mm} \forall i \in R
\end{equation}
As such, the objective would be to minimize the maximum deviation from the ideal situation \eqref{eq:ideal}, which can be formulated as the following linear optimization problem:
\begin{equation}
\label{eq:BWD}
\tag{BWD}
\begin{aligned}
\xi^{*} = \;  &\text{minimize} & & 
\xi \\
& \text{subject to} & & - \xi \leq V_{L}(\mathbf{x}_B ) - a_{B i} V_{L}(\mathbf{x}_i) \leq \xi \hspace{0.5cm}  \forall {i} \in R \setminus \{B \} \\
& & &    -\xi \leq V_{L}(\mathbf{x}_i ) - a_{i W} V_{L}(\mathbf{x}_W ) \leq \xi \hspace{0.5cm}  \forall {i} \in R \setminus \{ W \} \\
&  & &  v_j^L (x_j^k ) \leq v_j^L (x_j^{k+1} ) \hspace{0.5cm} \forall j \in N \text{ and }  \forall k \\
&  & & \sum_{j=1}^{n} v_j (\overline{x}_j )=1 \\
& & & v_j (\underline{x}_j )=0  \hspace{0.5cm} \forall j \in N \, .
\end{aligned}
\end{equation}

Solving \eqref{eq:BWD}, we can obtain the piecewise linear approximations of the attribute value functions and its optimum, $\xi^{*}$, is an indicator of  \emph{compatibility} of the preferences of the expert with the most suitable underlying value function $V_{L}$. If the results are deemed reliable, then the linear approximations can be used to rate the entire set of alternatives $\{ \mathbf{x}_1,\ldots,\mathbf{x}_m \}$. Coherently with the literature on preference disaggregation, we suggest that further analyses, especially sensitivity, be carried out to test the robustness of the results.

It must be pointed out that \citet{BarbatiGrecoLami2024} already suggested a formulation compatible with \eqref{eq:BWD}---see eq. (14) in their paper---with the only differences that (i) they employ the Deck-of-Cards method to rate alternatives (ii) they consider the final rating obtained from the Deck-of-Cards method instead of a set of comparisons $A^{BO} \cup A^{OW}$. Hereafter we will focus on \eqref{eq:BWD} and propose methods to study consistency, compatibility and extreme rankings. Furthermore, we stress that we employed a min-max approach to remain coherent with the original BWM but, as shown also by \citet{BarbatiGrecoLami2024}, the optimization problem can be easily modified to accommodate other formulations, e.g., the sum of the errors.

\subsection{A proposal to select the reference set}

The choice of the reference set $R$ can be seen as a preprocessing phase, but this should not diminish its importance. While in existing disaggregation studies, the reference set is usually given, here, we consider some desiderata to propose a method to select the reference set:
\begin{enumerate}[start=1,label={(D\arabic*):}]
    \item For all the attributes, the selected alternatives should cover all the segments into which the attribute ranges were divided. This ensures that there is preferential information on each segment of each attribute.
    \item Even considering the first desideratum, the reference set should be as informative as possible. Let us provide an example to justify our approach. If we consider two alternatives $a$ and $b$ such that $a$ outperforms $b$ with respect to all attributes, then knowing the degree of preference of $a$ over $b$ does not unveil information on the relative importance of attributes. Consider, instead, the two alternatives $c$ and $d$, and only the two attributes $j$ and $j'$ such that $c$ scores better than $d$ with respect to $j$ and, vice versa, $d$ is better than $c$ with respect to $j'$. In this case, knowing that $d$ is considered better than $c$ may give information on the greater importance of attribute $j'$ over attribute $j$. Note also that $a$ Pareto dominates $b$, and that no Pareto dominance exists between $c$ and $d$. So, it may be wise to prefer a reference set where no contained alternative is dominated. Namely, we want to avoid stratified reference sets. 
    \item The cardinality of the reference set should be minimized to decrease the cognitive load. This is coherent with current parsimonious approaches to the elicitation of preferences \citep{AbastanteEtAl2019,CorrenteEtAl2024}.
\end{enumerate}

It is necessary to define auxiliary parameters and sets to formulate the optimization problem that puts together the three desiderata. We recall that each criteria is partitioned into $s_{j}$ (possibly equally wide) subintervals. For simplicity, we assume that $s_{j}$ is the same for each criterion and we call it $s$. At this point, one can simply construct a binary array $\mathbf{A}=(a_{ijk})_{m \times n \times s} \in \{ 0,1\}^{m \times n \times s} $
where
\[
a_{ijk}=
\begin{cases}
1, & \text{if the $i$th alternative covers the $k$th segment of the $j$th attribute} \\
0, & \text{otherwise}
\end{cases}
\]
We define $\mathcal{D}$ as the set of pairs $(i,i')$ with $i < i'$ and for which one alternative Pareto dominates another.

We propose a combinatorial optimization problem such that each segment is covered by at least $b \in \mathbb{N}_{+}$ alternatives and that in the selected set there is not any Pareto dominance relation between alternatives. That is, we can solve

\begin{alignat}{3}
 & {\text{minimize}} \quad & & \sum_{i \in M} y_{i} \label{eq:OF} \\
 & \text{subject to} \quad  & &  \sum _{i \in M} a_{i,j,k} y_i  \geq b \hspace{0.5cm} \forall j,k \label{eq:c1} \\
 & & & y_{i}+y_{i'} \leq 1 \hspace{0.5cm} \forall (i,i') \in \mathcal{D} \label{eq:c2} \\
 & & &    y_{i} \in \{ 0,1\} \hspace{0.5cm} \forall i \in M \label{eq:c3}
\end{alignat}

The objective function \eqref{eq:OF} is a simple count of the number of alternatives in the reference set, and its minimization can be associated to (D3).  Constraints \eqref{eq:c1} imply that each attribute segment should be covered by at least $b$ alternatives and thus it is expressive of (D1). With $b=1$ the optimization problem minimizes the cardinality of the reference set such that each attribute segment is covered at least once.
Constraints \eqref{eq:c2} entail that if there is a Pareto dominance relation between alternatives $i$ and $i'$, then at most one of them can be in the reference set. Hence, the optimal reference set cannot contain any two alternatives such that one Pareto dominates the other, and \eqref{eq:c2} corresponds to (D2). Finally, $\eqref{eq:c3}$ simply states the variable domain.
Furthermore, it may be possible that an expert may not have sufficient expertise with respect to some alternatives. In this case, it is simple to modify the optimization problem to impose that such alternatives cannot be selected. A pragmatic approach could be imposing $y_{i}=0$ for all such alternatives.

It is also worth mentioning that the purpose of the proposed approach here is to find a non-dominated set as a reference point, not to find the Pareto optimal set of the alternative set. That is, the remaining alternatives which are not selected as the reference set are not necessarily dominated alternatives.  Lastly, a cautionary note: it may be increasingly difficult to solve this problem for a growing number of splitting points. In fact, the greater number of segments to be covered conflicts with the constraint that the chosen reference alternatives cannot dominate each other.

\subsection{Necessary relation from the BWD model}

It could happen that the optimization problem \eqref{eq:BWD} yields infinitely many optimal solutions. For this reason, we suggest an approach that tries to find the necessary ranking of the alternatives. That is, we propose an approach inspired by Robust Ordinal Regression \citep{CorrenteEtAl2013}. First, we call $\xi^{*}$ the optimal value of \eqref{eq:BWD}. For each pair of alternatives $(\mathbf{x}_{p},  \mathbf{x}_{q})$, we maximize their value difference as follows:
\begin{equation}
\label{eq:BWDnec}
\begin{aligned}
\delta_{pq} = \; &  \text{maximize} & & V_{L}(\mathbf{x}_{p}) - V_{L}(\mathbf{x}_{q}) \\
& \text{subject to} & & - \xi^{*} \leq V_{L}(\mathbf{x}_B ) - a_{B i} V_{L}(\mathbf{x}_i) \leq \xi^{*}\hspace{0.5cm}  \forall {i} \in R \setminus \{B \} \\
& & &    -\xi^{*} \leq V_{L}(\mathbf{x}_i ) - a_{i W} V_{L}(\mathbf{x}_W ) \leq \xi^{*} \hspace{0.5cm}  \forall {i} \in R \setminus \{ W \} \\
&  & &  v_j^L (x_j^k ) \leq v_j^L (x_j^{k+1} ) \hspace{0.5cm} \forall j \in N \text{ and }  \forall k \\
&  & & \sum_{j=1}^{n} v_j (\overline{x}_j )=1 \\
& & & v_j (\underline{x}_j )=0  \hspace{0.5cm} \forall j \in N
\end{aligned}
\end{equation}

Clearly, if $\delta_{pq}<0$, then for all optimal value functions, $V_{L}(\mathbf{x}_p) - V_{L}(\mathbf{x}_q) < 0$. This implies that $\mathbf{x}_q$ is always valued higher than $\mathbf{x}_p$ across all optimal solutions. Hence, we can say that $\mathbf{x}_q$ is necessarily preferred to $\mathbf{x}_p$. In shorthand notation, we denote this necessary preference as $\mathbf{x}_q \succ_{\mathcal{N}} \mathbf{x}_p$. By repeatedly solving \eqref{eq:BWDnec} for all pairs of distinct indices $({p},{q}) \in M \times M$, we can elicit the necessary preference relation $\succ_{\mathcal{N}}$ across the entire set of alternatives. The results can be represented by a Hasse diagram.

\subsection{Consistency analysis}

In the previous section we recognized that the optimal value $\xi^{*}$ contains information on the discrepancy between the preferences expressed by the expert and the most suitable underlying model $V_{L}$. Hence, we can interpret $\xi^{*}$ as an index of \emph{compatibility}. It is worth noting that compatibility should not be confused with the concept of (internal) \emph{consistency}, which, in the case of the BWM, collapses into the condition $a_{Bj}a_{jW}=a_{BW}$ for all $j \in N$. Consider, for instance, that the preferences of the expert can be perfectly consistent, i.e. rational, and yet violate the principle of Pareto dominance between alternatives at the same time. This latter violation implies that the preferences cannot be fully compatible with the underlying model.

The conceptual difference between compatibility and consistency is shown in Figure \ref{fig:comp_cons}. Consistency is an intrinsic property of the preferences expressed by an expert, while compatibility depends on both the preferences and the underlying model.

\begin{figure}[htb]
    \centering
    \includegraphics[width=0.75\linewidth]{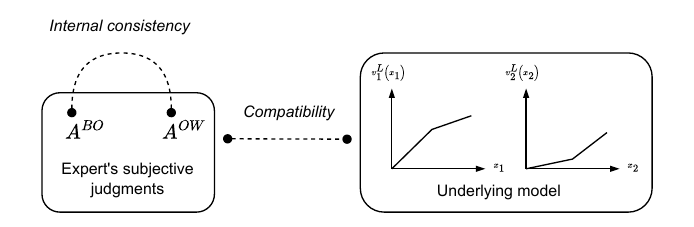}
    \caption{Compatibility and consistency.}
    \label{fig:comp_cons}
\end{figure}

Consequently, it might be more challenging to evaluate the significance of the value of the index $\xi^{*}$, especially since it is debatable whether this value should be minimized. In reality, $\xi^{*}$ is influenced by the design parameters, including the splitting points, of the model in question. An increase in the number of splitting points allows the underlying model to be more flexible and increases its alignment with preferences. However, artificially inflating the number of splitting points to improve compatibility could lead to overfitting issues.

We further note that, in the realm of preference disaggregation, the evaluation of internal consistency is a specific feature of the BWD, and possibly a unicum in the field of preference disaggregation. Other models, such as UTA and UTASTAR, require, as input, preference orders which are, by definition, transitive. Furthermore, they provide ordinal information and thus any analysis of cardinal information loses its grip. We conclude, also in light of Figure \ref{fig:comp_cons}, that the analysis of compatibility is not enough and it should be coupled with an analysis of the consistency of the preferences. Starting from the principle that the level of inconsistency of preferences should not be model-dependent, therefore, we adopt the Ordinal Consistency Ratio ($OR$) and the Cardinal Consistency Ratio ($CR$) proposed by  \citet{liang2020} to quantify deviations from these consistency conditions in the BWD method. 

The Ordinal Consistency Ratio focuses on the ranking of alternatives in pairwise comparisons, ensuring that the relative ordering of preferences is maintained, which is calculated as:
\begin{equation}
\label{eq:or}
OR = \max_i OR_i=\max_i \left\{ \frac{1}{n} \sum_{k=1}^{n} F\left((a_{Bk} - a_{Bi}) \cdot (a_{iW} - a_{kW})\right) \right\}
\end{equation}
where \(F(x)\) is a step function, which is defined as: 
\[
 F(x) = 
\begin{cases}
1, & \text{if } x < 0, \\
0.5, & \text{if } x= 0 \text{ and } (a_{Bk} - a_{Bi}\neq 0 \text{ or } a_{iW} - a_{kW}\neq 0), \\
0, & \text{otherwise.}
\end{cases}
\]
The cardinal consistency check complements the ordinal consistency ratio by focusing on the magnitude of differences between pairwise comparisons. It examines whether the pairwise comparisons reflect the true scale of preferences, ensuring that the differences between alternatives are proportional to the decision maker’s intentions. To evaluate cardinal consistency, we use the input-based Cardinal Consistency Ratio ($CR$), which is defined as: 

\begin{equation}
\label{eq:cr}
CR=
\begin{cases}
\max_{i} \dfrac{|a_{Bi}a_{iW}-a_{BW}|}{|a_{BW}a_{BW}-a_{BW}|}, &a_{BW}\geq 2\\
0, &a_{BW}=0
\end{cases}
\end{equation}

Identifying inconsistent judgments and understanding the extent of their deviation from full consistency is only part of the process.  It is also crucial to determine acceptable thresholds for inconsistency  for ensuring reliable decision outcomes in the BWD method. Since the consistency measurements in BWD mirror those in BWM, we adopt the thresholds for the input-based consistency ratio proposed by \citet{liang2020}. These thresholds act as benchmarks, where if the consistency ratios fall below these thresholds, the judgments are considered sufficiently consistent. However, if inconsistencies exceed these thresholds, an iterative process is employed to improve consistency. This process involves identifying the most inconsistent judgments, providing feedback to the decision-maker, revising the inconsistent comparisons, recalculating the $OR$ and $CR$, and repeating these steps until the judgments fall within the acceptable consistency range \citep{liang2022}. This approach ensures that the final preferences are refined and more consistent, leading to reliable and defensible decisions. Readers can refer to the original work  \citep{liang2022} for detailed steps and mathematical formulations.

\subsection{Extreme ranking analysis }

A number of sensitivity analysis tools already proposed for robust ordinal regression can be easily implemented for the BWD model. One example is the \emph{extreme ranking analysis} \citet{KadzinskiGrecoSlowinski2012}. In some decision-making contexts, it is often important to determine  the maximum and minimum possible ranks that can be achieved by an alternative in the final ranking. This type of sensitivity analysis helps assess the stability of an alternative's position. To determine these bounds, we solve two optimization problems for each alternative, always searching within the set of the optimal solutions of \eqref{eq:BWD}.

\begin{itemize}

\item The outranking count ($\overline{d}_{i}$) represents the maximum number of alternatives that the given alternative  $\mathbf{x}_{i}$ can outrank, at the same time. To determine this value, we solve the following optimization problem: 

\begin{equation}
\label{eq:BWDmaxrank}
\begin{aligned}
\overline{d}_{i} = \; & \text{maximize} & & \sum_{h \in M \setminus \{i\} } z_{h} \\
& \text{subject to} & & - \xi^{*} \leq V_{L}(\mathbf{x}_B ) - a_{B i} V_{L}(\mathbf{x}_i) \leq \xi^{*}\hspace{0.5cm}  \forall {i} \in R \setminus \{B \} \\
& & &    -\xi^{*} \leq V_{L}(\mathbf{x}_i ) - a_{i W} V_{L}(\mathbf{x}_W ) \leq \xi^{*} \hspace{0.5cm}  \forall {i} \in R \setminus \{ W \} \\
& & & V_{L}(\mathbf{x}_{i})+ \varepsilon \cdot z_{h} \geq V_{L}(\mathbf{x}_{h}) -  q_{h} \hspace{0.5cm} \forall h \in M \setminus \{i\} \\
& & & z_{h} + q_{h} = 1 \hspace{0.5cm} \forall h \in M \\
&  & &  v_j^L (x_j^k ) \leq v_j^L (x_j^{k+1} ) \hspace{0.5cm} \forall j \text{ and }  \forall k \\
&  & & \sum_{j=1}^{n} v_j (\overline{x}_j )=1 \\
& & & v_j (\underline{x}_j )=0  \hspace{0.5cm} \forall j \in N \\
& & & z_{h},q_{h} \in \{ 0,1 \} \hspace{0.5cm} \forall h \in M
\end{aligned}
\end{equation}

where, again, $\varepsilon$ is an extremely small positive value. The optimization problem searches in the space of the optimal solutions of \eqref{eq:BWD} the one that maximizes the number of alternatives that can simultaneously be judged worse that $\mathbf{x}_{i}$. This is done by maximizing the sum of the binary variables $z_{h}$, where $z_{h}=1$, if and only if, for that given compatible value function, $V_{L}(\mathbf{x}_{i}) > V_{L}(\mathbf{x}_{h})$. The best possible rank ($\overline{r}_{i}$) of alternative $\mathbf{x}_{i}$ is then $\overline{r}_{i} = m - \overline{d}_{i}$, where $m$ represents the total number of alternatives being evaluated in the decision-making problem.

\item  Similarly, the dominance count ($\underline{d}_{i}$) represents the maximum number of alternatives that can simultaneously be strictly preferred to $\mathbf{x}_{i}$. It is sufficient to solve the following problem to find $\underline{d}_{i}$:

\begin{equation}
\label{eq:BWDminrank}
\begin{aligned}
\underline{d}_{i} = \; & \text{maximize} & & \sum_{h \in M \setminus \{ i\} } z_{h} \\
& \text{subject to} & & - \xi^{*} \leq V_{L}(\mathbf{x}_B ) - a_{B i} V_{L}(\mathbf{x}_i) \leq \xi^{*}\hspace{0.5cm}  \forall {i} \in R \setminus \{B \} \\
& & &    -\xi^{*} \leq V_{L}(\mathbf{x}_i ) - a_{i W} V_{L}(\mathbf{x}_W ) \leq \xi^{*} \hspace{0.5cm}  \forall {i} \in R \setminus \{ W \} \\
& & & V_{L}(\mathbf{x}_{h})+ \varepsilon \cdot z_{h} \geq V_{L}(\mathbf{x}_{i}) -  q_{h} \hspace{0.5cm} \forall h \in M \setminus \{i\} \\
& & & z_{h} + q_{h} = 1 \hspace{0.5cm} \forall h \in M \\
&  & &  v_j^L (x_j^k ) \leq v_j^L (x_j^{k+1} ) \hspace{0.5cm} \forall j \text{ and }  \forall k \\
&  & & \sum_{j=1}^{n} v_j (\overline{x}_j )=1 \\
& & & v_j (\underline{x}_j )=0  \hspace{0.5cm} \forall j \in N \\
& & & z_{h},q_{h} \in \{ 0,1 \} \hspace{0.5cm} \forall h \in M
\end{aligned}
\end{equation}

The worst possible rank ($\underline{r}_{i}$) of alternative $\mathbf{x}_{i}$ is then calculated as $\underline{r}_{i} = \underline{d}_{i} + 1$. 

\end{itemize}

By definition, $\overline{r}_{i} \leq \underline{r}_{i}$, and therefore we define the \emph{range of possible ranks} as $[\overline{r}_{i}, \underline{r}_{i}]$ for each alternative $i \in M$ with the goal of providing valuable insight into the stability and variability of the alternative's ranking.
 
\subsection{An interval-valued extension}

Many decision analysis methodologies have been extended to accommodate uncertain and imprecise preference information. This need is particularly significant in the BWD method, where cardinal information is asked for holistic comparisons between alternatives, which are often more cognitively demanding than specific comparisons between criteria levels or in the form of trade-offs. Additionally, the interval-based approach allows experts to express uncertainty and imprecision more naturally by providing bounds on their judgments rather than a single precise value,  and has the advantage that no assumption is made on the underlying probability distribution.

Using this approach, when an expert compares two alternatives $\mathbf{x}_i$ and $\mathbf{x}_j$, then we can assume that her judgement $\tilde{a}_{ij}=[{a}^{-}_{ij},{a}^{+}_{ij}]$ is an interval, meaning that, $V(\mathbf{x}_{i})/V(\mathbf{x}_{j}) \in [{a}^{-}_{ij},{a}^{+}_{ij}]$. Equivalently,
\begin{equation}
\label{eq:intervals_tight}
a_{ij}^{-} V(\mathbf{x}_{j}) \leq V(\mathbf{x}_{i}) \leq a_{ij}^{+} V(\mathbf{x}_{j})
\end{equation}
Nevertheless, it is possible that, due to conflicting preferences and inconsistencies, there may not exist an additive value function $V$ satisfying \eqref{eq:intervals_tight} and therefore one may want to relax its formulation using the slack variable $\xi$ as follows:
\begin{equation}
\label{eq:intervals_relax}
-\xi + a_{ij}^{-} V(\mathbf{x}_{j}) \leq V(\mathbf{x}_{i}) \leq a_{ij}^{+} V(\mathbf{x}_{j}) + \xi
\end{equation}

Consequently, we propose the following linear optimization problem as an extension of the BWD that can consider interval-valued comparisons.

\begin{equation}
\label{eq:I-BWD}
\tag{I-BWD}
\begin{aligned}
\xi_{I}^{*} = \; &\text{minimize} & & 
\xi \\
& \text{subject to} & & -\xi + a_{Bi}^{-} V_{L}(\mathbf{x}_{i}) \leq V_{L}(\mathbf{x}_{B}) \leq a_{Bi}^{+} V_{L}(\mathbf{x}_{i}) + \xi \hspace{0.5cm}  \forall {i} \in R \setminus \{B \} \\
& & &   -\xi + a_{iW}^{-} V_{L}(\mathbf{x}_{W}) \leq V_{L}(\mathbf{x}_{i}) \leq a_{iW}^{+} V_{L}(\mathbf{x}_{W}) + \xi \hspace{0.5cm}  \forall {i} \in R \setminus \{W \}\\
&  & &  v_j^L (x_j^k ) \leq v_j^L (x_j^{k+1} ) \hspace{0.5cm} \forall j \in N \text{ and }  \forall k \\
&  & & \sum_{j=1}^{n} v_j (\overline{x}_j )=1 \\
& & & v_j (\underline{x}_j )=0  \hspace{0.5cm} \forall j \in N \\
& & & \xi \geq 0 .
\end{aligned}
\end{equation}

Note that \eqref{eq:I-BWD} is a generalization of \eqref{eq:BWD} and when the preference information is given in the form of real numbers, the former collapses into the latter. Indeed, all the steps proposed to test the robustness of the real-valued BWD method---i.e. optimization problems \eqref{eq:BWDnec}, \eqref{eq:BWDmaxrank}, and \eqref{eq:BWDminrank}---can be extended to the interval case by replacing the constraints
\begin{equation}
\begin{cases}
    - \xi^{*} \leq V_{L}(\mathbf{x}_B ) - a_{B i} V_{L}(\mathbf{x}_i) \leq \xi^{*}\hspace{0.5cm}  \forall {i} \in R \setminus \{B \} \\
    -\xi^{*} \leq V_{L}(\mathbf{x}_i ) - a_{i W} V_{L}(\mathbf{x}_W ) \leq \xi^{*} \hspace{0.5cm}  \forall {i} \in R \setminus \{ W \}
\end{cases}
\end{equation}
with the generalized interval constraints:
\begin{equation}
\begin{cases}
 -\xi_{I}^{*} + a_{Bi}^{-} V_{L}(\mathbf{x}_{j}) \leq V_{L}(\mathbf{x}_{i}) \leq a_{Bi}^{+} V_{L}(\mathbf{x}_{j}) + \xi_{I}^{*} \hspace{0.5cm}  \forall {i} \in R \setminus \{B \} \\
 -\xi_{I}^{*} + a_{iW}^{-} V_{L}(\mathbf{x}_{j}) \leq V_{L}(\mathbf{x}_{i}) \leq a_{iW}^{+} V_{L}(\mathbf{x}_{j}) + \xi_{I}^{*} \hspace{0.5cm}  \forall {i} \in R \setminus \{W \}
\end{cases}
\end{equation}
Indeed, BWD can be seen as a restriction of I-BWD. That is, given a problem, if all the real-valued preferences lie within the interval-valued ones, then $\xi^{*}_{I} \leq \xi^{*}$.

\begin{proposition}
If $a_{Bi} \in [a_{Bi}^{-},a_{Bi}^{+}]$, $a_{iW} \in [a_{iW}^{-},a_{iW}^{+}]$ and the consequence vectors are the same for both models, then, $\xi^{*}_{I} \leq \xi^{*}$.
\end{proposition}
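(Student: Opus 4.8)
The plan is to show that every feasible solution of \eqref{eq:BWD} can be converted into a feasible solution of \eqref{eq:I-BWD} with the same value of the objective, so that the infimum defining $\xi_I^*$ cannot exceed $\xi^*$. Since both problems share the same decision variables (the breakpoint values $v_j^L(x_j^k)$, hence the same induced $V_L$, plus the slack $\xi$) and the same structural constraints (monotonicity, normalization $\sum_j v_j(\overline{x}_j)=1$, $v_j(\underline{x}_j)=0$, $\xi\ge 0$), the only thing to check is that the comparison constraints of \eqref{eq:BWD} imply those of \eqref{eq:I-BWD} once the hypotheses $a_{Bi}\in[a_{Bi}^-,a_{Bi}^+]$ and $a_{iW}\in[a_{iW}^-,a_{iW}^+]$ are invoked.

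First I would take an arbitrary optimal solution $(\xi^*, \{v_j^L(x_j^k)\})$ of \eqref{eq:BWD} and note that the consequence vectors being identical across the two models means $V_L(\mathbf{x}_i)$ is the same quantity in both. Fix $i\in R\setminus\{B\}$. From the \eqref{eq:BWD} constraint we have $V_L(\mathbf{x}_B) - a_{Bi}V_L(\mathbf{x}_i)\le \xi^*$, i.e. $V_L(\mathbf{x}_B)\le a_{Bi}V_L(\mathbf{x}_i)+\xi^*$. Because $V_L(\mathbf{x}_i)\ge 0$ (a consequence of monotonicity together with $v_j(\underline{x}_j)=0$, so all breakpoint values and hence $V_L$ are nonnegative) and $a_{Bi}\le a_{Bi}^+$, we get $a_{Bi}V_L(\mathbf{x}_i)\le a_{Bi}^+ V_L(\mathbf{x}_i)$, hence $V_L(\mathbf{x}_B)\le a_{Bi}^+ V_L(\mathbf{x}_i)+\xi^*$, which is the upper I-BWD inequality. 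Symmetrically, $V_L(\mathbf{x}_B)\ge a_{Bi}V_L(\mathbf{x}_i)-\xi^*\ge a_{Bi}^- V_L(\mathbf{x}_i)-\xi^*$ using $a_{Bi}\ge a_{Bi}^-$ and $V_L(\mathbf{x}_i)\ge 0$, giving the lower I-BWD inequality. The Others-to-Worst family is handled identically, replacing $(B,i)$ by $(i,W)$ and using $a_{iW}\in[a_{iW}^-,a_{iW}^+]$ and $V_L(\mathbf{x}_W)\ge 0$.

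Thus $(\xi^*, \{v_j^L(x_j^k)\})$ is feasible for \eqref{eq:I-BWD}, so the minimum of \eqref{eq:I-BWD} satisfies $\xi_I^*\le \xi^*$. The only subtlety — and the step I would state explicitly — is the nonnegativity of $V_L(\mathbf{x}_i)$, which is what lets one slide the coefficient from $a_{Bi}$ out to the interval endpoints without flipping an inequality; this follows because the normalization and monotonicity constraints force every $v_j^L(x_j^k)\in[0,1]$, hence $V_L(\mathbf{x})=\sum_j v_j^L(x_j)\ge 0$ for all consequence vectors. I do not expect any genuine obstacle here: the argument is a direct feasibility-inclusion/monotonicity-of-the-feasible-region argument, and the result could even be phrased as ``the feasible region of \eqref{eq:BWD} embeds into that of \eqref{eq:I-BWD} at equal objective value.'' If one wanted the strict-versus-weak distinction to be airtight, I would also remark that optimality of $\xi^*$ is not needed — the inclusion holds at every feasible point — so in particular the infimum comparison is valid.
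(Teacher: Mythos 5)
Your argument is correct and is essentially the paper's own proof: both establish that, thanks to $a_{Bi}^{-}\leq a_{Bi}\leq a_{Bi}^{+}$ (and likewise for $a_{iW}$) together with $V_{L}(\mathbf{x}_i)\geq 0$, every point feasible for \eqref{eq:BWD} at slack level $\xi$ is feasible for \eqref{eq:I-BWD} at the same level, whence $\xi_I^*\leq\xi^*$. If anything, you are slightly more explicit than the paper in justifying the nonnegativity of $V_L$ from the monotonicity and boundary constraints, which is a welcome addition rather than a deviation.
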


\begin{proof} Let us consider the case of the comparisons between the best and the others. In the I-BWD, the first family of constraints is a linearization of
\begin{equation}
\label{eq:xi_IBWD}
\xi_{I} \geq \max \left\{ V(x_B)-a^{+}_{Bi}V(x_{i}) , a^{-}_{Bi}V(x_{i}) - V(x_B), 0 \right\}
 \end{equation}
In the BWD, if we consider the real value $a_{Bi}$ as a degenerate interval, i.e. $a_{Bi} \in [a_{Bi},a_{Bi}]$, then we can write
\begin{equation}
\label{eq:xi_BWD}
\xi \geq \max \left\{ V(x_B)-a_{Bi}V(x_{i}) , a_{Bi}V(x_{i}) - V(x_B), 0 \right\}.
 \end{equation}
 As we know that $a_{Bi}^{-}\leq a_{Bi} \leq a_{Bi}^{+}$ and $V(\mathbf{x}_{i})\geq 0$ for all $i \in R$ we deduce 
 \begin{align*}
  V(x_B)-a^{+}_{Bi}V(x_{i}) & \leq V(x_B)-a_{Bi}V(x_{i}) \\
  a^{-}_{Bi}V(x_{i}) - V(x_B) & \leq a_{Bi}V(x_{i}) - V(x_B)
 \end{align*}
and therefore the right-hand side of \eqref{eq:xi_BWD} cannot be smaller than the right-hand side of \eqref{eq:xi_IBWD}. We skip the formal proof, but a similar line of reasoning can be applied to the second family of constraints for both (BWD) and (I-BWD). If we consider that $\xi \geq 0$ naturally holds in (BWD), then we can see that all other constraints are the same, guaranteeing the same freedom in the definition of $V$. Hence, due to the minimization of both $\xi_{I}$ and $\xi$, we obtain $\xi_{I}^{*} \leq \xi^{*}$.
\end{proof}

\section{A case study in logistics performance evaluation}
\label{sec:case-study}

We tested the BWD on the data collected and used by \citet{RezaeiEtAl2018} to rank countries according to a composite indicator of logistic performance. In their study, data from the World Bank was used to assess the performance of the World countries with respect to the following criteria: Customs; Infrastructure; Services; Timeliness; Tracking and tracing; International shipment. For the sake of simplicity and ease of interpretation of the results, we only focused on European countries.

 We used the minimum and maximum values of each attribute as lower and upper levels of each attribute. That is, given the decision matrix $\mathbf{X}=(x_{ij})_{m \times n}$ and considering that all six criteria were originally expressed as benefit criteria, we defined

\[
\underline{x}_{j}=\min_{i \in M} x_{ij} \;\;\text{ and }\;\;  \overline{x}_{j}=\max_{i \in M} x_{ij}
\]

\paragraph{Reference set selection and preference elicitation}

One splitting point was allowed for each criterion set and was positioned midway in the interval $[\underline{x}_{j},\overline{x}_{j}]$. Next, we solved the set covering-inspired optimization problem \eqref{eq:OF}--\eqref{eq:c3} to find a mutually non-dominated minimum covering set. This set contained only three countries and was taken as a good initial suggestion. Given the possibility of asking for more comparisons, we added two more alternatives to better explore the lower part of the ranking. The final reference set $R$ was defined as
\[
R=\{ \text{Estonia,\,Hungary,\,Latvia}\} \cup \{ \text{Greece,\,Moldova} \} \subset M
\]

By adding the coverage provided by each alternative for each criteria segment, we arrive at the following matrix.
\begin{equation*}
 \mathbf{A}^{R} = \left(\sum_{i \in R}  a_{ijk} \right)_{j\in N,k \in \{ 1,2\}}  \\
                =\underbrace{
    \begin{pmatrix}
    0 & 1 \\
    1 & 0 \\
    1 & 0 \\
    1 & 0 \\
    1 & 0 \\
    0 & 1 
    \end{pmatrix}
    +
        \begin{pmatrix}
    1 & 0 \\
    0 & 1 \\
    0 & 1 \\
    0 & 1 \\
    0 & 1 \\
    0 & 1 
    \end{pmatrix}
    +
        \begin{pmatrix}
    0 & 1 \\
    0 & 1 \\
    0 & 1 \\
    1 & 0 \\
    0 & 1 \\
    1 & 0 
    \end{pmatrix}}_{    = \begin{pmatrix}
    1 & 2 \\
    1 & 2 \\
    1 & 2 \\
    2 & 1 \\
    1 & 2 \\
    1 & 2 
    \end{pmatrix}}
    +  
    \begin{pmatrix}
    1 & 0 \\
    0 & 1 \\
    1 & 0 \\
    1 & 0 \\
    0 & 1 \\
    0 & 1 
    \end{pmatrix}
    +
    \begin{pmatrix}
    1 & 0 \\
    1 & 0 \\
    1 & 0 \\
    1 & 0 \\
    1 & 0 \\
    1 & 0 
    \end{pmatrix}
    =
        \begin{pmatrix}
    3 & 2 \\
    2 & 3 \\
    3 & 2 \\
    4 & 1 \\
    2 & 3 \\
    2 & 3 
    \end{pmatrix}
\end{equation*}
As each entry of the matrix $\mathbf{A}^{R}$ is at least one, then we know that the alternatives indexed in $R$ cover all attribute levels. We can see that, as expected, the first three matrices are sufficient to cover all criteria levels and  that none of the first three alternatives dominate the other. This corroborates the validity of the optimization problem \eqref{eq:OF}--\eqref{eq:c3}. Next, an expert used the 1--9 scale and provided preference information on the reference alternatives by means of the $A^{BO}$ and $A^{OW}$ sets presented in Table \ref{tab:ABO&AOW}.
\begin{table}[htb]
    \centering
    \begin{tabular}{llllll}
    \toprule
      &Estonia& Hungary & Latvia & Greece & Moldova \\
    \midrule
       $A^{BO}$ ($B$=Estonia)&1& 3& 4& 5& 8\\
 $A^{OW}$ ($W$=Moldova)& 8& 5& 3& 4&1\\
 \bottomrule
    \end{tabular}
    \caption{Best-to-Others ($A^{BO}$) and Others-to-Worst ($A^{OW}$) pairwise comparison vectors.}
    \label{tab:ABO&AOW}
\end{table}
Once we acquired the preferences in $A^{BO}$ and $A^{OW}$, we verified their consistency to detect possibly inaccurate and unreliable preference information. This involves checking both ordinal and cardinal consistency, and making adjustments if necessary.

\paragraph{Consistency analysis}

First, we assess the cardinal consistency by calculating the Input-based Consistency Ratio ($CR$) according to formula \eqref{eq:cr}. In this case, $CR=0.214$, which is less than the threshold (the input-based consistency threshold table can be checked in  \citep{liang2020}) of 0.284 for 5 alternatives and $a_{BW}=8$. This indicates that the cardinal inconsistency of the provided preferences is tolerable, allowing us to proceed to the next step.

Using the Ordinal Consistency Ratio ($OR$) as formulated in eq. \eqref{eq:or}, we found $OR=0.2$, indicating a violation of ordinal consistency. The local Ordinal Consistency Ratios ($OR_i$), shown in Table \ref{tab:ordinal_check}, help locate specific inconsistencies.  In this case, the preferences provided by the expert on Latvia are not ordinally consistent with those expressed on Greece.

\begin{table}
\centering
\begin{tabular}{l l l l l l l}
\toprule
  &   Estonia&   Hungary &   Latvia &   Greece &   Moldova &   $OR_i$\\
\midrule
  Estonia& 0 & 0 & 0 & 0 & 0 & 0 \\
  Hungary & 0 & 0 & 0 & 0 & 0 & 0\\
  Latvia & 0 & 0 & 0 & 1& 0 & 0.2\\
  Greece & 0 & 0 & 1& 0 & 0 & 0.2\\
  Moldova & 0 & 0 & 0 & 0 & 0 & 0 \\
  \bottomrule
\end{tabular}
\caption{The ordinal consistency check table: ``1''  indicates an ordinal inconsistency between the corresponding alternatives, while ``0'' indicates consistency.}
    \label{tab:ordinal_check}
\end{table}

After identifying and locating inconsistencies in the preferences, we re-engaged with the expert to review and reconsider her judgments. Upon reflection, the expert acknowledged the inconsistencies and proceeded to modify the preferences using the consistency improvement process from the work of \citet{liang2022}. Using the same approach, we derived the admissible ranges for $A^{BO}$ and $A^{OW}$, as shown in Table \ref{tab:range}, and the visualization of the improving ranges, shown in Figure \ref{fig:improving}.  

\begin{table}[htb]
\centering
\begin{tabular}{l l l l l l l }
\toprule
Alternatives &   &   Estonia&   Hungary &   Latvia &   Greece &   Moldova \\
\midrule
Original &   & [1, 8]& [1, 8]& [1, 8]& [1, 8]& [1, 8]\\
\midrule
Acceptable ranges &   $A^{BO}$& [1, 2.99]& [1, 4.78]& [1, 7.97]& [1, 5.98]& [1, 8]\\
  &   $A^{OW}$& [1, 8]& [1, 7.97]& [1, 5.98]& [1, 4.78]& [1, 2.99]\\
Improving ranges &   $A^{BO}$& [1, 1]& [1.6, 3]& [2.67, 4]& [2, 5]& [8, 8]\\
  &   $A^{OW}$& [8, 8]& [3.6, 5]& [1.67, 3]& [1, 4]& [1, 1]\\
\bottomrule
\end{tabular}
\caption{The admissible ranges for revision.}
    \label{tab:range}
\end{table}

\begin{figure}[htp]
            \centering
            \includegraphics[width=0.45\linewidth]{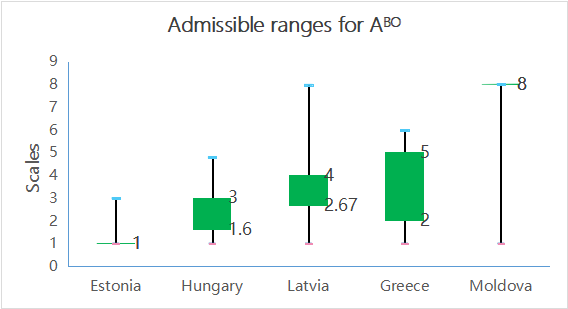}
            \hfill
            \includegraphics[width=0.45\linewidth]{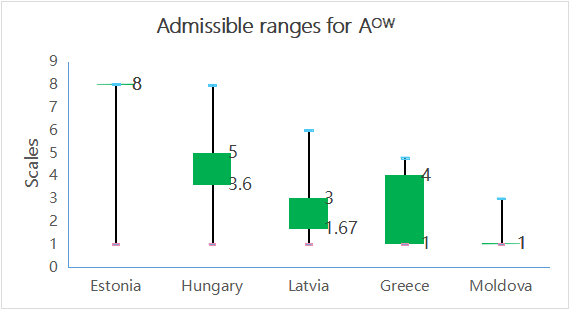}
            \caption{Improving ranges, in green, for $A^{BO}$ and $A^{OW}$.}
            \label{fig:improving}
\end{figure}
With the help of the visualization of the ranges for improvement, the expert can easily adjust the inconsistent comparison. After consideration, the expert made some minor revisions on the preferences. The modified sets $A^{BO}$ and $A^{OW}$ are reported in Table \ref{tab:ABO&AOW2}.

\begin{table}[htb]
    \centering
    \begin{tabular}{llllll}
    \toprule
      &Estonia& Hungary & Latvia & Greece & Moldova \\
    \midrule
       $A^{BO}$ ($B$=Estonia)&1& 3& 4& 4& 8\\
 $A^{OW}$ ($W$=Moldova)& 8& 5& 3& 3&1\\
 \bottomrule
    \end{tabular}
    \caption{Revised Best-to-Others ($A^{BO}$) and Others-to-Worst ($A^{OW}$) pairwise comparison vectors.}
    \label{tab:ABO&AOW2}
\end{table}
After revising the preferences, we rechecked their consistency using both the ordinal and cardinal consistency indices. The revised preferences showed an Ordinal Consistency Ratio $OR=0$, indicating full ordinal consistency. The Cardinal Consistency Ratio is $CR=0.125$, which is below the threshold of $0.284$ for 5 alternatives and $a_{BW}=8$. This confirms that the judgments are now acceptably consistent and represent an improvement over the initial preferences before revision. 

\paragraph{Optimization results and compatibility}

Preferences were used as inputs in the BWD optimization problem, and an optimal value $\xi^{*}=0.030689$ was found. This means that even the best fitting model $V_{L}$ cannot comply exactly with the expert's judgments. This should be allowed, since the cardinal preferences of an expert expressed as real numbers can hardly ever fully reflect an underlying piecewise linear model.

The full analysis revealed that $\succ_{\mathcal{N}}$ is almost a total order relation. This is confirmed in Figure \ref{fig:ranks}, showing very low variability in the set of rankings obtained through optimization problems \eqref{eq:BWDmaxrank} and \eqref{eq:BWDminrank}. Only one swap is possible, between Greece and Slovenia. The results shown in Figure \ref{fig:ranks} seem to hint at a good precision of the outcome and further tests that we conducted, but we avoided reporting, showed any level of incompatibility, i.e. $\xi^{*}>0$, very often leads to an extremely stable ranking.

 \vspace{1cm}
\begin{figure}[htb]
    \centering
    \includegraphics[trim=0 0 7mm 5mm, height=10.5cm]{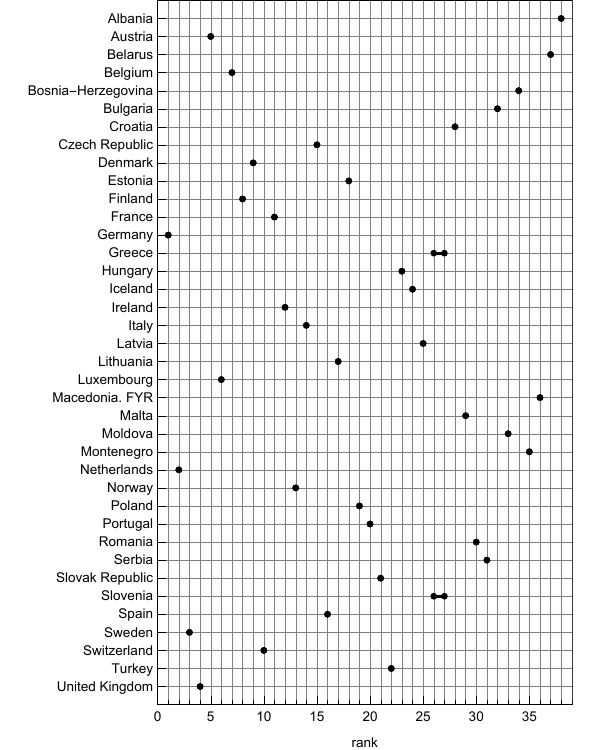}
    \hspace{-1cm}
    \includegraphics[trim=0 0 7mm 5mm, height=10.5cm]{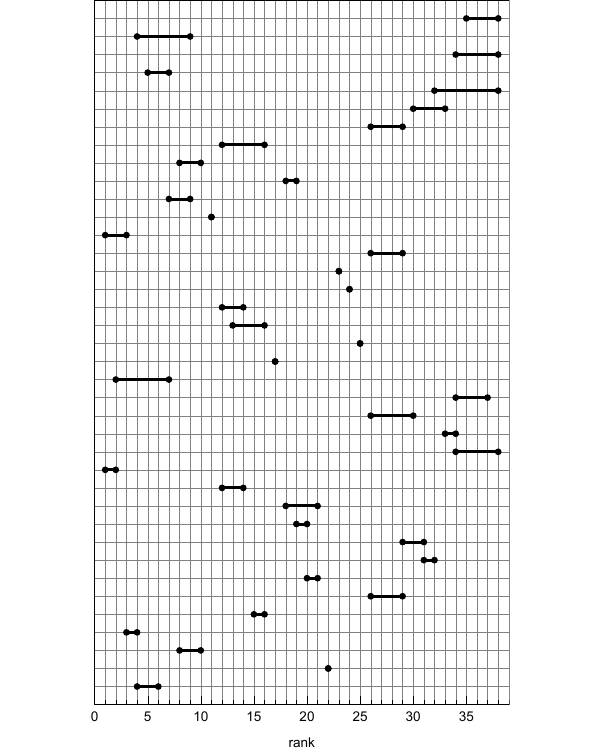}
    \caption{Rankings of European countries based on BWD (on the left) and I-BWD (on the right).}
    \label{fig:ranks}
\end{figure}
\paragraph{Interval-BWD}

We also tested the I-BWD on the same case study, together with the analysis of dominance relations and maximum and minimum ranks. In this case, the expert information is summarized in Table \ref{tab:ABO&AOW2_intervals}. Note that the improving ranges shown in Table \ref{tab:range} can be used to guide experts to extend their original real-valued preferences to interval ones in a way that may decrease their inconsistency.

\begin{table}[htb]
    \centering
    \begin{tabular}{llllll}
    \toprule
      &Estonia& Hungary & Latvia & Greece & Moldova \\
      \midrule
       $\tilde{A}^{BO}$ ($B$=Estonia) & $[1,1]$ & $[2,3]$ & $[3,4]$ & $[4,5]$& $[7,9]$\\
 $\tilde{A}^{OW}$ ($W$=Moldova)& $[7,9]$ & $[4,5]$& $[2,4]$ &  $[3,4]$ & $[1,1]$ \\
 \bottomrule
    \end{tabular}
    \caption{The two sets of interval-valued comparisons: $\tilde{A}^{BO}$ and $\tilde{A}^{OW}$.}
\label{tab:ABO&AOW2_intervals}
\end{table}

In the case of I-BWD, the imprecise information provided by the expert was sufficient to achieve full compatibility between the preferences and the underlying model, i.e. $\xi^{*}_{I}=0$. Consequently, the necessary relation $\succ_{\mathcal{N}}$ is less informative, as shown in the Hasse diagram in Figure \ref{fig:Hasse_intervals}.

\begin{figure}[th]
    \centering    \includegraphics[width=1\linewidth]{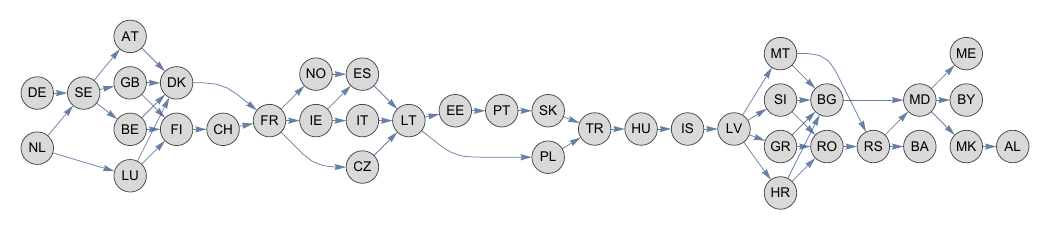}
    \caption{Necessary ranking $\succ_{\mathcal{N}}$ of European countries obtained with the interval-valued BWD. Countries are labeled using the ISO 3166-1 alpha-2 standard.}
    \label{fig:Hasse_intervals}
\end{figure}

\paragraph{Sensitivity analysis}

Results of the extreme ranking analysis are presented in the right-hand side of Figure \ref{fig:ranks} and confirms the previous intuition on the much greater variability of the attainable ranks. To quantify the imprecision due to the existence of multiple compatible rankings, we introduce the following heuristic index
\[
U=\frac{1}{m} \sum_{i \in M} \frac{\underline{r}_{i}-\overline{r}_{i}}{m-1} \in [0,1]
\]
where each term of the sum is the ``spread'' of the possible ranks that the $i$th alternative can attain divided by the maximum possible range, going from $1$, the best rank, to $m$, the lowest possible attainable rank. The scaling factor preceding the sum normalizes the quantity into the interval $[0,1]$. In the case of the I-BWD, we have $U = 0.05938$, whereas, for the real-valued BWD it was $U=0.00135$.

\paragraph{Computational feasibility}

The analysis for the I-BWD, including the necessary relation $\succ_{N}$ represented in Figure \ref{fig:Hasse_intervals} and the maximum and minimum ranks in Figure \ref{fig:ranks} was carried out with Wolfram Mathematica 14.0.0, with a total computational time of 16.65 seconds on an Intel(R) Core(TM) i7-8550U CPU @ 1.80GHz with 16Gb RAM. Optimization problems were solved using Gurobi 11.0.1. If the problem was run for the entire set considered by \citet{RezaeiEtAl2018}---165 World countries---then the computational time would still remain under 3 minutes. These results seem to support the computational feasibility of the proposed approach.
Furthermore, out of the 16.65 seconds employed for the case of European countries, 13.53 seconds were spent in solving problem \eqref{eq:BWDnec}, which, as it shall be solved for all distinct pairs of indices $(p,q)$, has quadratic complexity with respect to $m$. On the contrary, the optimization problems used to find the best and worst attainable ranks have linear complexity with respect to $m$. Hence, if the optimization problems used to find $\succ_{\mathcal{N}}$ were omitted, then the entire procedure would have a much greater scalability. 

\section{Conclusions}
\label{sec:conclusions}

The Best-Worst Disaggregation Method (BWD) offers an advancement in the field of Multi-Criteria Decision Analysis (MCDA) by combining the principles of the Best-Worst Method (BWM) with the disaggregation approach. This method addresses the challenges inherent in traditional aggregation and disaggregation techniques by leveraging the strengths of both to create a robust and reliable decision-making framework. It is a valuable tool in scenarios where experts have a clear understanding of alternatives but may not have predefined value functions. 

BWD's core advantage lies in its ability to derive attribute value functions directly from the experts' two pairwise comparison vectors of alternatives. This process ensures that the resulting weights and value functions align closely with the actual preferences of the experts, leading to more reliable and meaningful outcomes. BWD reduces the cognitive load on experts and improves the reliability of derived models by simplifying the comparison process and ensuring consistency through admissible ranges for iterative visual refinements. Moreover, BWD's structured approach to preference elicitation and consistency improvement ensures that the decision-making process remains transparent. 

Besides, we proposed a method to select the reference set that ensures comprehensive coverage of attribute levels, avoids Pareto dominance among alternatives, and minimizes cognitive load on the decision-makers. This approach enhances the representativeness and effectiveness of the reference set, contributing to more accurate and reliable preference modeling. 

Furthermore, we extended the BWD method to handle interval-valued preferences, allowing for the accommodation of uncertainty or imprecise information in expert judgments. This extension broadens the applicability of BWD in real-world decision-making scenarios where exact numerical assessments may not be feasible.

Future research on BWD could adapt it to accommodate group decision-making scenarios by developing techniques to aggregate individual preferences and address potential conflicts; adapting BWD for use in dynamic environments where decision criteria and alternatives may change over time; enhancing the computational efficiency of BWD to handle larger sets of criteria and alternatives by exploring advanced optimization techniques and parallel computing strategies; investigating methods to minimize biases in expert judgments, such as incorporating techniques from behavioral decision theory or utilizing machine learning or artificial intelligence to identify and correct inconsistencies; conducting practical case studies in various domains to validate the effectiveness of BWD and identify specific areas for improvement. Finally, in this manuscript we employed the well-known 1--9 scale, but other methods to collect statements on alternatives, such as those used in GRIP \citep{figueira2009}, may be incorporated.





\section*{Acknowledgments} We thank Majid Mohammadi for his comments on early versions of this manuscript.

{\small
\bibliographystyle{apalike}
\bibliography{biblio}

\begin{thebibliography}{}

\bibitem[Abastante et~al., 2019]{AbastanteEtAl2019}
Abastante, F., Corrente, S., Greco, S., Ishizaka, A., and Lami, I.~M. (2019).
\newblock A new parsimonious {AHP} methodology: Assigning priorities to many
  objects by comparing pairwise few reference objects.
\newblock {\em Expert Systems with Applications}, 127:109--120.

\bibitem[Adame, 2016]{Adame2016}
Adame, B.~J. (2016).
\newblock Training in the mitigation of anchoring bias: A test of the
  consider-the-opposite strategy.
\newblock {\em Learning and Motivation}, 53:36--48.

\bibitem[Barbati et~al., 2024]{BarbatiGrecoLami2024}
Barbati, M., Greco, S., and Lami, I.~M. (2024).
\newblock The {Deck-of-cards-based Ordinal Regression} method and its
  application for the development of an ecovillage.
\newblock {\em European Journal of Operational Research}, 319(3):845--861.

\bibitem[Beuthe et~al., 2000]{BeutheEtAl2000}
Beuthe, M., Eeckhoudt, L., and Scannella, G. (2000).
\newblock A practical multicriteria methodology for assessing risky public
  investments.
\newblock {\em Socio-Economic Planning Sciences}, 34(2):121--139.

\bibitem[Ciomek et~al., 2017]{CiomekEtAl2017}
Ciomek, K., Kadzinski, M., and Tervonen, T. (2017).
\newblock Heuristics for selecting pair-wise elicitation questions in multiple
  criteria choice problems.
\newblock {\em European Journal of Operational Research}, 262(2):693--707.

\bibitem[Corrente et~al., 2013]{CorrenteEtAl2013}
Corrente, S., Greco, S., Kadzi{\'n}ski, M., and S{\l}owi{\'n}ski, R. (2013).
\newblock Robust ordinal regression in preference learning and ranking.
\newblock {\em Machine Learning}, 93:381--422.

\bibitem[Corrente et~al., 2024]{CorrenteEtAl2024}
Corrente, S., Greco, S., and Rezaei, J. (2024).
\newblock Better decisions with less cognitive load: The parsimonious {BWM}.
\newblock {\em Omega}, 126:103075.

\bibitem[Dias et~al., 2018]{DiasEtAl2018}
Dias, L.~C., Morton, A., and Quigley, J. (2018).
\newblock Elicitation: State of the art and science.
\newblock {\em Elicitation: The Science and Art of Structuring Judgement},
  pages 1--14.

\bibitem[Doumpos et~al., 2022]{DoumposEtAl2022}
Doumpos, M., Grigoroudis, E., Matsatsinis, N.~F., and Zopounidis, C. (2022).
\newblock Preference disaggregation analysis: An overview of methodological
  advances and applications.
\newblock In {\em Intelligent Decision Support Systems: Combining Operations
  Research and Artificial Intelligence-Essays in Honor of Roman
  S{\l}owi{\'n}ski}, pages 73--100. Springer.

\bibitem[Doumpos and Zopounidis, 2007]{DoumposZopounidis2007}
Doumpos, M. and Zopounidis, C. (2007).
\newblock Model combination for credit risk assessment: A stacked
  generalization approach.
\newblock {\em Annals of Operations Research}, 151(1):289--306.

\bibitem[Doumpos and Zopounidis, 2011]{doumpos2011}
Doumpos, M. and Zopounidis, C. (2011).
\newblock Preference disaggregation and statistical learning for multicriteria
  decision support: A review.
\newblock {\em European journal of operational research}, 209(3):203--214.

\bibitem[Edwards and Barron, 1994]{EdwardsBarron1994}
Edwards, W. and Barron, F.~H. (1994).
\newblock {SMARTS} and {SMARTER}: {I}mproved simple methods for multiattribute
  utility measurement.
\newblock {\em Organizational Behavior and Human Decision Processes},
  60(3):306--325.

\bibitem[Eisenf{\"u}hr et~al., 2010]{EisenfuhrEtAl2010}
Eisenf{\"u}hr, F., Weber, M., and Langer, T. (2010).
\newblock {\em Rational Decision Making}.
\newblock Springer.

\bibitem[Figueira et~al., 2009]{figueira2009}
Figueira, J.~R., Greco, S., and Słowiński, R. (2009).
\newblock Building a set of additive value functions representing a reference
  preorder and intensities of preference: {GRIP} method.
\newblock {\em European Journal of Operational Research}, 195(2):460--486.

\bibitem[Gehrlein et~al., 2023]{GehrleinEtAl2023}
Gehrlein, J., Miebs, G., Brunelli, M., and Kadzi{\'n}ski, M. (2023).
\newblock An active preference learning approach to aid the selection of
  validators in blockchain environments.
\newblock {\em Omega}, 118:102869.

\bibitem[Greco et~al., 2016]{GrecoEtAl2016}
Greco, S., Figueira, J., and Ehrgott, M. (2016).
\newblock {\em Multiple Criteria Decision Analysis}, volume~37.
\newblock Springer.

\bibitem[Greco et~al., 2008]{GrecoEtAl2008}
Greco, S., Mousseau, V., and S{\l}owi{\'n}ski, R. (2008).
\newblock Ordinal regression revisited: multiple criteria ranking using a set
  of additive value functions.
\newblock {\em European Journal of Operational Research}, 191(2):416--436.

\bibitem[Jacquet-Lagreze and Siskos, 1982]{JacquetSiskos1982}
Jacquet-Lagreze, E. and Siskos, J. (1982).
\newblock Assessing a set of additive utility functions for multicriteria
  decision-making, the uta method.
\newblock {\em European Journal of Operational Research}, 10(2):151--164.

\bibitem[Joslyn et~al., 2011]{Joslyn2011}
Joslyn, S., Savelli, S., and Nadav-Greenberg, L. (2011).
\newblock Reducing probabilistic weather forecasts to the worst-case scenario:
  Anchoring effects.
\newblock {\em Journal of Experimental Psychology: Applied}, 17(4):342.

\bibitem[Kadzi{\'n}ski et~al., 2017]{KadzinskiEtAl2017}
Kadzi{\'n}ski, M., Ghaderi, M., Wasikowski, J., and Agell, N. (2017).
\newblock Expressiveness and robustness measures for the evaluation of an
  additive value function in multiple criteria preference disaggregation
  methods: An experimental analysis.
\newblock {\em Computers \& Operations Research}, 87:146--164.

\bibitem[Kadzi{\'n}ski et~al., 2012]{KadzinskiGrecoSlowinski2012}
Kadzi{\'n}ski, M., Greco, S., and S{\l}owi{\'n}ski, R. (2012).
\newblock Extreme ranking analysis in robust ordinal regression.
\newblock {\em Omega}, 40(4):488--501.

\bibitem[Keeney and Raiffa, 1993]{KeeneyRaiffa1993}
Keeney, R.~L. and Raiffa, H. (1993).
\newblock {\em Decisions with Multiple Objectives: Preferences and Value
  Tradeoffs}.
\newblock Cambridge University Press.

\bibitem[Korhonen et~al., 2012]{KorhonenEtAl2012}
Korhonen, P.~J., Silvennoinen, K., Wallenius, J., and {\"O}{\"o}rni, A. (2012).
\newblock Can a linear value function explain choices? {An} experimental study.
\newblock {\em European Journal of Operational Research}, 219(2):360--367.

\bibitem[Liang et~al., 2020]{liang2020}
Liang, F., Brunelli, M., and Rezaei, J. (2020).
\newblock Consistency issues in the best worst method: Measurements and
  thresholds.
\newblock {\em Omega}, 96:102175.

\bibitem[Liang et~al., 2022]{liang2022}
Liang, F., Brunelli, M., and Rezaei, J. (2022).
\newblock Best-worst tradeoff method.
\newblock {\em Information Sciences}, 610:957--976.

\bibitem[Liu et~al., 2019]{LiuEtAl2019}
Liu, J., Liao, X., Kadzi{\'n}ski, M., and S{\l}owi{\'n}ski, R. (2019).
\newblock Preference disaggregation within the regularization framework for
  sorting problems with multiple potentially non-monotonic criteria.
\newblock {\em European Journal of Operational Research}, 276(3):1071--1089.

\bibitem[Mi et~al., 2019]{MiEtAl2019}
Mi, X., Tang, M., Liao, H., Shen, W., and Lev, B. (2019).
\newblock The state-of-the-art survey on integrations and applications of the
  best worst method in decision making: Why, what, what for and what's next?
\newblock {\em Omega}, 87:205--225.

\bibitem[Montibeller and Von~Winterfeldt, 2015]{MontibellerWinterfeldt2015}
Montibeller, G. and Von~Winterfeldt, D. (2015).
\newblock Cognitive and motivational biases in decision and risk analysis.
\newblock {\em Risk Analysis}, 35(7):1230--1251.

\bibitem[Morton and Fasolo, 2009]{MortonFasolo2009}
Morton, A. and Fasolo, B. (2009).
\newblock Behavioural decision theory for multi-criteria decision analysis: a
  guided tour.
\newblock {\em Journal of the Operational Research Society}, 60(2):268--275.

\bibitem[Mussweiler, 2002]{Mussweiler2002}
Mussweiler, T. (2002).
\newblock The malleability of anchoring effects.
\newblock {\em Experimental Psychology}, 49(1):67.

\bibitem[Mussweiler and Strack, 2000]{MussweilerStrack2000}
Mussweiler, T. and Strack, F. (2000).
\newblock The use of category and exemplar knowledge in the solution of
  anchoring tasks.
\newblock {\em Journal of Personality and Social Psychology}, 78(6):1038.

\bibitem[Mussweiler et~al., 2000]{MussweilerStrackPfeiffer2000}
Mussweiler, T., Strack, F., and Pfeiffer, T. (2000).
\newblock Overcoming the inevitable anchoring effect: Considering the opposite
  compensates for selective accessibility.
\newblock {\em Personality and Social Psychology Bulletin}, 26(9):1142--1150.

\bibitem[Rezaei, 2015]{Rezaei2015}
Rezaei, J. (2015).
\newblock Best-worst multi-criteria decision-making method.
\newblock {\em Omega}, 53:49--57.

\bibitem[Rezaei, 2016]{Rezaei2016}
Rezaei, J. (2016).
\newblock Best-worst multi-criteria decision-making method: Some properties and
  a linear model.
\newblock {\em Omega}, 64:126--130.

\bibitem[Rezaei, 2021]{Rezaei2021}
Rezaei, J. (2021).
\newblock Anchoring bias in eliciting attribute weights and values in
  multi-attribute decision-making.
\newblock {\em Journal of Decision Systems}, 30(1):72--96.

\bibitem[Rezaei et~al., 2022]{RezaeiEtAl2022}
Rezaei, J., Arab, A., and Mehregan, M. (2022).
\newblock Equalizing bias in eliciting attribute weights in multiattribute
  decision-making: experimental research.
\newblock {\em Journal of Behavioral Decision Making}, 35(2):e2262.

\bibitem[Rezaei et~al., 2024]{RezaeiEtAl2024}
Rezaei, J., Arab, A., and Mehregan, M. (2024).
\newblock Analyzing anchoring bias in attribute weight elicitation of {SMART},
  {S}wing, and best-worst method.
\newblock {\em International Transactions in Operational Research},
  31(2):918--948.

\bibitem[Rezaei et~al., 2018]{RezaeiEtAl2018}
Rezaei, J., van Roekel, W.~S., and Tavasszy, L. (2018).
\newblock Measuring the relative importance of the logistics performance index
  indicators using best worst method.
\newblock {\em Transport Policy}, 68:158--169.

\bibitem[Rudin, 2019]{Rudin2019}
Rudin, C. (2019).
\newblock Stop explaining black box machine learning models for high stakes
  decisions and use interpretable models instead.
\newblock {\em Nature Machine Intelligence}, 1(5):206--215.

\bibitem[Saaty, 1977]{Saaty1977}
Saaty, T.~L. (1977).
\newblock A scaling method for priorities in hierarchical structures.
\newblock {\em Journal of Mathematical Psychology}, 15(3):234--281.

\bibitem[Salo and H{\"a}m{\"a}l{\"a}inen, 2010]{SaloHamalainen2010}
Salo, A. and H{\"a}m{\"a}l{\"a}inen, R.~P. (2010).
\newblock Preference programming--multicriteria weighting models under
  incomplete information.
\newblock In {\em Handbook of Multicriteria Analysis}, pages 167--187.
  Springer.

\bibitem[Schilling et~al., 2007]{SchillingEtAl2007}
Schilling, M.~S., Oeser, N., and Schaub, C. (2007).
\newblock How effective are decision analyses? {Assessing} decision process and
  group alignment effects.
\newblock {\em Decision Analysis}, 4(4):227--242.

\bibitem[Siskos et~al., 2016]{SiskosEtAl2016}
Siskos, Y., Grigoroudis, E., and Matsatsinis, N.~F. (2016).
\newblock {\em Multiple Criteria Decision analysis: State of the Art Surveys},
  chapter UTA methods, pages 315--362.
\newblock Springer.

\bibitem[Smith and Dyer, 2021]{SmithDyer2021}
Smith, J.~E. and Dyer, J.~S. (2021).
\newblock On (measurable) multiattribute value functions: An expository
  argument.
\newblock {\em Decision Analysis}, 18(4):247--256.

\bibitem[Tversky and Kahneman, 1974]{TverskyKahneman1974}
Tversky, A. and Kahneman, D. (1974).
\newblock Judgment under uncertainty: Heuristics and biases.
\newblock {\em Science}, 185(4157):1124--1131.

\bibitem[Vetschera, 2006]{Vetschera2006}
Vetschera, R. (2006).
\newblock Inconsistent behaviour in electronic negotiations--an exploratory
  analysis.
\newblock {\em Electronic Markets}, 16(3):201--211.

\bibitem[Von~Winterfeldt and Edwards, 1986]{WinterfeldtEdwards1986}
Von~Winterfeldt, D. and Edwards, W. (1986).
\newblock {\em Decision Analysis and Behavioral Research}.
\newblock Cambridge University Press.

\bibitem[Zopounidis and Paraschou, 2013]{ZopounidisParaschou2013}
Zopounidis, C. and Paraschou, D. (2013).
\newblock {\em Multicriteria Decision Aid Methods for the Prediction of
  Business Failure}, volume~12.
\newblock Springer Science \& Business Media.

\end{thebibliography}
}

\end{document}